\documentclass[12pt,a4paper,reqno]{amsart}
\usepackage{amsfonts}
\usepackage{amssymb}
\usepackage{amsthm}
\usepackage{amsmath}
\usepackage{newlfont}
\usepackage{graphicx}
\usepackage{color}

\def\r{\mathbb R}
 
\def\s{\mathbb S}

\def\n{\mathbf n}

\newtheorem{theorem}{Theorem}[section]

\newtheorem{example}[theorem]{Example}

\title[Ruled surfaces that are critical points of the Dirichlet energy]{Classification of the ruled surfaces that are critical points of the Dirichlet energy}

\author[Rafael L\'opez]{Rafael L\'opez}
\address{Departamento de Geometr\'{\i}a y Topolog\'{\i}a\\
 Universidad de Granada 18071 Granada, Spain}
\email{rcamino@ugr.es}
\subjclass{Primary 53A10; Secondary 53C42}

\keywords{Dirichlet energy, ruled surface, cylindrical surface, anisotropic mean curvature. }
\date{}

\begin{document}
\begin{abstract}
We classify all ruled surfaces in Euclidean space that are critical points of the Dirichlet energy, obtaining explicit parametrizations of these surfaces. \end{abstract}

\maketitle

\section{Introduction and statement of the results} \label{intro}

Let $\Sigma $ be a surface in Euclidean space $\r^3$ given as the graph $z=u(x,y)$ of a smooth function $u$ defined in a bounded domain   $\Omega\subset\r^2$. The Dirichlet energy of $u$ is the integral $\int_\Omega|Du|^2$. The function $u$ is a critical point of the Dirichlet energy for all   volume-preserving variations of $\Sigma$ if and only if $u$   is a critical point of the functional 
$$E[u]=\int_\Omega|Du|^2+\Lambda\int_\Omega u,$$
where  $\Lambda\in\r$ stands for a Lagrange multiplier. Notice that the integral $\int_\Omega u$ is the volume enclosed by $\Sigma$ with the solid cylinder $\Omega\times\r$. The Euler-Lagrange equation of $E$ is 
\begin{equation}\label{hh}
u_{xx}+u_{yy}=\frac{\Lambda}{2},
\end{equation}
 where $u=u(x,y)$. In the particular case $\Lambda=0$,  the function $u$ is harmonic. A surface $\Sigma$ satisfying locally \eqref{hh} is  called a {\it stationary surface} of the Dirichlet energy. In this paper we want to study stationary surfaces   of the functional $E$ under some geometric properties using techniques of differential geometry. Here we follow the seminal paper  \cite{re}. Instead to use the function $u$, it is more suitable to write $E[u]$  in terms of the surface $\Sigma$. If $\nu =(\nu_1,\nu_2,\nu_3)$  is the unit normal of $\Sigma$, then $\nu=(-Du,1)/\sqrt{1+|Du|^2}$. Thus the Dirichlet energy becomes
\begin{equation}\label{fu}
\mathcal{F}(\Sigma)=\int_\Sigma \left(\dfrac{1}{\nu_3}-\nu_3\right)\, d\Sigma,
\end{equation}
 where $d\Sigma$ is the area element of $\Sigma$.  More generally, we can consider   functionals    of type  $\mathcal{F}(X)=\int_\Sigma F(\nu)\, d\Sigma$, where $X\colon\Sigma\to\r^3$ is an immersion of  $\Sigma$ into $\r^3$ and $F\colon U\subset\s^2\to\r^+$ is a positive smooth function on the unit sphere $\s^2$. These functionals are called {\it anisotropic} because  the energy depends on the normal direction $\nu$ of $\Sigma$. Anisotropic energies appear in fluid phenomenon when the surface tensions of interfaces  depend  on $\nu$ \cite{ta}.  Critical points of   $\mathcal{F}$ for compactly supported volume-preserving variations are characterized by the property that the function $\Lambda$ given by 
$$\Lambda:= 2HF-\mbox{div}_\Sigma DF$$
 is constant, where $DF$ is the gradient of $F$ in $\s^2$. The   function $\Lambda$ is called the {\it anisotropic mean curvature} of   $\Sigma$. Thus, critical points of $\mathcal{F}$, or stationary surfaces, are surfaces with constant anisotropic mean curvature (CAMC).
 
Usually, for general anisotropic energies, it is assumed that    $F$ is elliptic  in the sense that the matrix  $D^2F+F\, \mbox{Id}$ is positive definite, where  $D^2F$ is the Hessian of $F$. This allows to define the Wulff shape associated to $\mathcal{F}$ as   the map $\xi\colon\s^2\to\r^3$ given  by $\xi(\nu)=DF(\nu)+F(\nu)\nu$. This map is the parametrization of a convex surface, which it is an ovaloid if   $U=\s^2$. We point out that   the Dirichlet energy    is elliptic and its Wulff shape is  the paraboloid  $z=x^2+y^2$, after translations and rescalings  in $\r^3$. Notice that the integrand in \eqref{fu} is only defined on a hemisphere of $\s^2$.
 
We   ask for those CAMC surfaces of the Dirichlet energy \eqref{fu} with some particular geometric property. A first example are  the rotational surfaces, that is, surfaces that are axially symmetric about an axis of $\r^3$. This reduces the equation $\Lambda=ct.$ in an ordinary differential equation. Exactly, CAMC surfaces of the Dirichlet energy that are   axially symmetric about the $z$-axis are given by 
 $$u(x,y)= c_1\log(\sqrt{x^2+y^2})+\frac{\Lambda}{8}(x^2+y^2)+c_2,\quad c_1,c_2\in\r.$$
When $c_1=0$ we obtain the Wulff shape and when $c_2=0$ we have the anisotropic catenoid of the Dirichlet energy. For the interesting reader in the differential-geometric viewpoint of   critical points of anisotropic energies, we refer that works of Koiso and Palmer  \cite{kp2,kp3,kp5,kp4} and references therein.  See also the recent papers \cite{ba,gmt,gx,jw,ro}. 

Other interesting family of surfaces are the ruled surfaces. These surfaces are generated by moving a straight line in Euclidean space $\r^3$ and they can be parametrized by 
\begin{equation}\label{para1}
X\colon I\times\r\to\r^3,\quad X(s,t)=\alpha(s)+t\beta(s),
\end{equation}
where $\alpha \colon I\to\r^3$ is a regular curve, called the {\it directrix} which we can suppose to be parametrized by arc-length. The curve $\beta$ is a curve on the unit $2$-sphere $\beta\colon I\to\s^2$ and it indicate the direction of the {\it rulings} at each point $\alpha(s)$. Moreover, we can assume $\langle \alpha(s),\beta(s)\rangle=0$ for all $s\in I$. In case that $\beta$ is a constant curve, $\beta(s)=\vec{w}$, the surface is called {\it cylindrical}. 

 In this paper, we give a complete classification of the ruled CAMC surfaces of the Dirichlet energy \eqref{fu}. Ruled surfaces of cylindrical type are studied separately. We prove in Sect. \ref{s3} that any cylindrical CAMC surface is a plane ($\Lambda=0$) or the directrix $\alpha$ is a parabola, that is, the surface is a parabolic cylinder (Thm. \ref{t2}). 
 
 For non-cylindrical CAMC surfaces, we have that  $\{\beta,\beta',\beta\times\beta'\}$ is an orthonormal frame. Since  $\langle \alpha(s),\beta(s)\rangle=0$, there are smooth functions $a,b\colon I\to\r$ such that 
\begin{equation}\label{ab}
\alpha(s)=a(s)\beta'(s)+ b(s)\beta(s)\times\beta'(s).
\end{equation}
The classification of the ruled non-cylindrical CAMC surfaces is  the following. 

\begin{theorem}\label{t1} Let $\Sigma$ be a ruled non-cylindrical surface in $\r^3$ parametrized by \eqref{para1} where $\alpha$ is given by \eqref{ab}. If the anisotropic mean curvature $\Lambda$  of the Dirichlet energy is constant then $\beta$ is a great circle of $\s^2$. Moreover, up to a rotation about the $z$-axis and translations of $\r^3$, the surface $\Sigma$ is one of the following surfaces:
\begin{enumerate}
\item Case $\Lambda=0$.
\begin{enumerate}
\item A plane.
\item A helicoid. The curve  $\beta$ is the horizontal equator $\beta(s)=(\cos s,\sin s,0)$ and
 \begin{equation}\label{sol1}
\begin{split}
a(s)&=c_1\cos s,\quad  c_1\not=0,\\
b(s)&=b_1\tan(s),\quad b_1\not=0.
\end{split}
\end{equation}

\item The curve $\beta$ is the vertical equator $\beta(s)=(\cos s,0,\sin s)$ and 
\begin{equation}\label{sol2}
\begin{split}
a(s)&=c_1\cos(s+c_2),\quad c_1,c_2\in\r, c_1\not=0,\\
b(s)&=b_1\tan(s),\quad b_1\not=0.
\end{split}
\end{equation}
\item The curve $\beta$ is a (non-horizontal, non-vertical) great circle of $\s^2$ and 
\begin{equation}\label{sol3}
\begin{split}
a(s)&=c_1\cos s+c_2\sin s\\
&- \frac{b_1 m}{1-m^2}\cos s \cot^{-1}(\sqrt{1-m^2}\cot s)+\frac{b_1}{m\sqrt{1-m^2}}\sin s,\\
b(s)&=\frac{b_1}{\sqrt{1-m^2}}\tan^{-1}(\sqrt{1-m^2}\tan s).
\end{split}
\end{equation}
 Here $ c_1,c_2,b_1,m \in\r$ with $b_1\not=0$, $m\in (0,1)$.

\end{enumerate}
\item Case $\Lambda\not=0$. Then $\beta$ is the vertical equator $\beta(s)=(\cos s,0,\sin s)$ and
\begin{equation}\label{sol4}
\begin{split}
a(s)&=c_1\cos s+c_2\sin s- \frac{\Lambda b_1^2\cos(2s)}{4\cos s},\quad c_1,c_2\in\r, \\
b(s)&= b_1\tan s,\quad b_1\not=0.
\end{split}
\end{equation} 
\end{enumerate}
\end{theorem}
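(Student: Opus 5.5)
We compute $\Lambda$ directly on the parametrization \eqref{para1}. The integrand in \eqref{fu} is $F(\nu)=1/\nu_3-\nu_3$. The quantity $2HF-\mbox{div}_\Sigma DF$ is equivalent to the graph equation \eqref{hh} written invariantly, so we can work with that form. With the unnormalized normal $N=X_s\times X_t=(\alpha'+t\beta')\times\beta$, one checks that $\Lambda$ takes the form
$$\Lambda=\frac{2\,Q(s,t)}{N_3^{3}}.$$
Here $N_3=\langle N,e_3\rangle$, and $Q$ is built from $\det(X_{ss},X_s,X_t)$, $\det(X_{st},X_s,X_t)$ and the third components. Equivalently, this is the Hessian trace $u_{xx}+u_{yy}$ expressed through the parametrization. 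Since $X$ is affine in $t$, both $N$ and $N_3$ are affine in $t$. Hence $\Lambda N_3^3-2Q=0$ is a polynomial identity in $t$ whose coefficients are functions of $s$. We write $\alpha$ by \eqref{ab} and use the frame $\{\beta,\beta',\beta\times\beta'\}$ with geodesic curvature $\kappa=\langle\beta'',\beta\times\beta'\rangle$, so that $\beta''=-\beta+\kappa\,\beta\times\beta'$. Then every vector is expressed in this frame, and the third components enter through the functions $\beta_3$, $\beta'_3$ and $(\beta\times\beta')_3$.

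The first step is to extract the leading coefficients in $t$. The highest-degree coefficient should involve only $\beta$, not $a$ or $b$. Setting it to zero forces a relation of the form $\kappa\cdot(\text{third components})=0$. We would argue that the alternatives are degenerate: for instance, $\beta$ cannot stay horizontal unless it is the horizontal equator, and the other branches lead to contradictions. The conclusion is $\kappa\equiv0$, so $\beta$ is a great circle. This is the step I expect to be the main obstacle. The coefficient identities are long, and ruling out $\kappa\neq0$ requires care with the cases where $(\beta\times\beta')_3$ vanishes identically or on intervals. There I would argue by analyticity on open subsets, and try first to show that the top coefficient factors as $\kappa$ times a nonvanishing expression.

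Once $\beta$ is a great circle, we rotate about the $z$-axis and write $\beta(s)=(\cos s,\,m'\sin s,\,m\sin s)$ with $m^2+m'^2=1$, $m\in[0,1]$, after reparametrizing by arc length. Then $\beta\times\beta'$ is constant, and the lower-order coefficients in $t$ become linear ODEs in $a$ and $b$. One coefficient gives a first-order equation for $b$, whose solution is $b=\frac{b_1}{\sqrt{1-m^2}}\tan^{-1}(\sqrt{1-m^2}\tan s)$. This reduces to $b_1\tan s$ when $m=1$, to $b_1 s$-type behaviour when $m=0$, and to a plane when $b_1=0$. The remaining coefficient gives a second-order equation of the form $a''+a=$ (a term in $b$, and $\Lambda$). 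Solving it by variation of constants yields the $\cot^{-1}$ term in \eqref{sol3}.

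Finally we split the cases. For $m=0$ (horizontal equator) the system forces $\Lambda=0$ and a helicoid as in \eqref{sol1}. For $m=1$ (vertical equator) we obtain \eqref{sol2} when $\Lambda=0$, and \eqref{sol4} when $\Lambda\neq0$, where the particular solution $-\Lambda b_1^2\cos(2s)/(4\cos s)$ comes from the $\Lambda N_3^3$ term. For $m\in(0,1)$ there is an extra coefficient equation that is compatible only with $\Lambda=0$, which gives \eqref{sol3}. The case $b_1=0$ yields a plane. Checking that the highest remaining coefficient forces $\Lambda=0$ except in the vertical case is a direct comparison of the $t$-degrees.
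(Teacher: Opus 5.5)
Your plan is correct. Its skeleton matches the paper's proof: write the stationarity condition as a polynomial identity in $t$, kill the coefficients from the top down, then solve for $b$ and $a$. The genuine difference is the form of the equation, and it makes your route much lighter.

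\textbf{What the paper does.} The paper works with \eqref{h3}, built from the non-unit Wulff frame $E_1,E_2$. After clearing the denominator $W^2$, \eqref{h3} is, up to a constant factor, $(N_1^2+N_2^2)(\Lambda N_3^3-2Q)=0$, a quintic in $t$. The spurious factor $N_1^2+N_2^2=W(1-\nu_3^2)$ accounts for:
\begin{itemize}
\item the factor $(e_{33}^2-1)$ in $A_5$;
\item the factor $e_{22}^2+e_{11}^2e_{33}^2=(1-e_{11}^2)(1-e_{33}^2)$ in $A_4$;
\item the extra branches $e_{33}^2=1$ that have to be disposed of separately;
\item the recourse to Mathematica.
\end{itemize}

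\textbf{What your form gives.} Let $\tilde g_{ij}=\langle X_i,X_j\rangle-(X_i)_3(X_j)_3$ be the pull-back of $dx^2+dy^2$. Then $Q=\tilde g_{22}\langle N,X_{ss}\rangle-2\tilde g_{12}\langle N,X_{st}\rangle$. It has degree at most $2$ in $t$, because $\langle N,X_{st}\rangle=\det(\alpha',\beta,\beta')$ does not depend on $t$. So your identity is only cubic, and it is valid even where $\nu_3^2=1$.

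Write $e_{11}=\beta_3$, $e_{22}=(\beta')_3$, $e_{33}=(\beta\times\beta')_3$. The $t^3$ and $t^2$ coefficients are $-\Lambda e_{33}^3$ and $3\Lambda e_{33}^2N_3|_{t=0}+2\kappa(1-e_{11}^2)$. Hence the step you call the main obstacle is immediate:
\begin{itemize}
\item If $\Lambda\neq0$, the top coefficient first forces $e_{33}=0$. Contrary to your guess, it is not of the form $\kappa\cdot(\cdots)$; $\kappa$ enters one degree lower.
\item In all cases one then gets $\kappa(1-\beta_3^2)=0$. If $\beta_3^2\equiv1$ on an interval, $\beta$ would be constant there, so $\kappa=0$.
\end{itemize}
No analyticity is needed, and none is available, since the data are only smooth.

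\textbf{The great-circle case.} Once $\kappa=0$, put $\tau=t+a'$. The whole identity becomes
\[
\Lambda\,(e_{22}b'-e_{33}\tau)^3=2\tau\bigl(2e_{11}e_{22}b'-(1-e_{11}^2)b''\bigr)+2(1-e_{11}^2)\,b'(a''+a)+4e_{11}e_{33}\,b'^2 .
\]
With your normal form, $e_{11}=m\sin s$, $e_{22}=m\cos s$, $e_{33}=m'$, so all inclinations are handled at once. The resulting equations are not linear in $(a,b)$, as you say, but triangular: first linear in $b'$, then linear in $a$ after dividing by $b'\neq0$.

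\textbf{What each approach buys.} The paper's formalism has the advantage that \eqref{h3} applies to any integrand $F(\nu_3)$. Yours exploits the linear Euler--Lagrange equation, which is specific to the Dirichlet energy.

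\textbf{Two mismatches with the printed statement.} When you carry out the computation, two things will not agree with the theorem as printed.
\begin{itemize}
\item For $m=0$ you get $b=b_1 s$, as the paper's own proof does, so the $\tan$ in \eqref{sol1} is a misprint.
\item For $m\in(0,1)$ one has $b'=b_1/(1-m^2\sin^2 s)$ and $a''+a=-2b_1m\sqrt{1-m^2}\,\sin s/(1-m^2\sin^2 s)^2$. Variation of constants gives
\[
a=c_1\cos s+c_2\sin s+\frac{m}{\sqrt{1-m^2}}\,b(s)\cos s .
\]
The printed term with $\cot^{-1}(\sqrt{1-m^2}\cot s)$ in \eqref{sol3} does not satisfy this equation. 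So check the formula you obtain rather than asserting that you recover \eqref{sol3} verbatim.
\end{itemize}
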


 The proof of Thm. \ref{t1} will be carried out in Sect. \ref{s4} separating the case $\Lambda=0$ (Subsect. \ref{s41}) from $\Delta\not=0$ (Subsect. \ref{s42}).

\section{Preliminaries}\label{s2}

In this section, we  recall the calculation of the anisotropic mean curvature of a surface $\Sigma$ when the energy functional is of type $\mathcal{F}(X)=\int_\Sigma F(\nu_3)\, d\Sigma$, and next, we particularize to the Dirichlet energy \eqref{fu}. A first observation is that the equation $\Lambda=ct.$ is preserved by translations of $\r^3$ and by dilations (with different constant $\Lambda$), but not by rigid motions in general. This is because the functional $\mathcal{F}$ depends on the unit normal $\nu$. However, the equation $\Lambda=ct.$  is invariant by rotations about an axis parallel to the $z$-axis.

Let $h\colon T\Sigma\times T\Sigma\to\r$ be the second fundamental form of $\Sigma$, $h(v,w)=-\langle d\nu(v),w\rangle$, $v,w\in T\Sigma$. 
The anisotropic mean curvature $\Lambda$ is given by 
\begin{equation}\label{h}
\Lambda=\frac{h(v_1,v_1)}{\mu_1}+\frac{h(v_2,v_2)}{\mu_2},
\end{equation}
where     $\{v_1,v_2\}$ is    an orthonormal frame of $T\Sigma$ and $\mu_1$ and $\mu_2$ are the principal curvatures of the Wulff shape   given by 
$$\frac{1}{\mu_1}=(1-\nu_3^2)F''(\nu_3)+\frac{1}{\mu_2},\quad \frac{1}{\mu_2}=F-\nu_3F'(\nu_3).$$
The principal directions $\{E_1,E_2\}$ of $\mu_1$ and $\mu_2$ are
$$E_1=e_3-\nu_3\nu,\quad E_2=\nu\times E_1,$$
where $e_3=(0,0,1)$. Here it is understood that $\nu_3^2\not=1$ and  because all our results are local, the $\nu_3^2\not=1$ on $\Sigma$.   In the particular case of the Dirichlet energy \eqref{fu}, we have 
$$\frac{1}{\mu_1}=\frac{2}{\nu_3^3},\quad \frac{2}{\mu_2}=\frac{1}{\nu_3}.$$
We use the basis  $\{E_1,E_2\}$ to compute $\Lambda$ in \eqref{h}. This basis is orthogonal but not unitary and $|E_1|^2=|E_2|^2=1-\nu_3^2$. From the expression of $\mu_1$ and $\mu_2$, identity   \eqref{h} is  
\begin{equation}\label{h2}
\Lambda\nu_3^3(1-\nu_3^2)=2\left(h(E_1,E_1)+ h(E_2,E_2)\nu_3^2\right) . 
\end{equation}
 We compute   \eqref{h2} for an arbitrary parametrization $X=X(s,t)$ of $\Sigma$. Since  $\{X_s,X_t\}$ is a basis of $T\Sigma$,   let express $E_1$ and $E_2$ in coordinates with respect to this basis:
\begin{equation}\label{e1e2}
E_1=c_{11} X_s+c_{12}X_t,\quad 
E_2=c_{21}X_s+c_{22}X_t.
\end{equation}
Then
$$h(E_1,E_1)=c_{11}^2h(X_s,X_s)+2 c_{11}c_{12}h(X_s,X_t)+c_{12}^2h(X_t,X_t),$$
$$h(E_2,E_2)=c_{21}^2h(X_s,X_s)+2 c_{21}c_{22}h(X_s,X_t)+c_{22}^2h(X_t,X_t).$$
Let $g_{ij}$ denote the coefficients of the first fundamental form of $X$ and let $\nu=\frac{X_s\times X_t}{|X_s\times X_t|}$. Then 
\begin{equation*}
\begin{split}
h(X_s,X_s)&=-\langle d\nu(X_s),X_s)\rangle=\frac{\mbox{det}(X_s,X_t,X_{ss})}{\sqrt{\mbox{det}(g_{ij})}}:=\frac{h_{11}}{\sqrt{\mbox{det}(g_{ij})}}\\
h(X_s,X_t)&=-\langle d\nu(X_s),X_t)\rangle=\frac{\mbox{det}(X_s,X_t,X_{st})}{\sqrt{\mbox{det}(g_{ij})}}:=\frac{h_{12}}{\sqrt{\mbox{det}(g_{ij})}}\\
h(X_t,X_t)&=-\langle d\nu(X_t),X_t)\rangle=\frac{\mbox{det}(X_s,X_t,X_{tt})}{\sqrt{\mbox{det}(g_{ij})}}:=\frac{h_{22}}{\sqrt{\mbox{det}(g_{ij})}}.
\end{split}
\end{equation*}
Then Eq. \eqref{h2} writes as
\begin{equation}\label{h3}
\begin{split}
\Lambda\nu_3^3(1-\nu_3^2) \sqrt{\mbox{det}(g_{ij})}&=
2 (c_{11}^2h_{11}+2c_{11}\,c_{12}h_{12} +c_{12}^2h_{22})\\
&+2\nu_3^2 ( c_{21}^2h_{11}+2c_{21}\,c_{22}h_{12} +c_{22}^2h_{22}).
\end{split}
\end{equation}

Immediately,    any plane of $\r^3$ is a stationary surface   for $\Lambda=0$ because  the second fundamental vanishes identically.   Notice that if $\Sigma$ is a horizontal plane, then $\nu_3^2=1$ and we cannot use \eqref{h3}.

\section{Stationary cylindrical surfaces } \label{s3}

A cylindrical surface $\Sigma$ is a   ruled surface where   the rulings are all parallel. A parametrization of $\Sigma$ is 
\begin{equation}\label{para2}
X\colon I\times\r\to\r^3,\quad X(s,t)=\alpha(s)+t\vec{w},
\end{equation}
where $\alpha \colon I\to\r^3$ is a   curve  parametrized by arc-length, $|\vec{w}|=1$ and $\alpha$ is contained in a plane orthogonal to $\vec{w}$.  In the following result, we classify all cylindrical CAMC surfaces.   

\begin{theorem} \label{t2}
Let $\Sigma$ be a cylindrical surface. If $\Sigma$ is a stationary surface of the Dirichlet energy \eqref{fu}, then $\alpha$ is a straight-line and $\Sigma$ is a plane ($\Lambda$) or $\alpha$ is a parabola and $\Sigma$ is a parabolic cylinder ($\Lambda\not=0$).
\end{theorem}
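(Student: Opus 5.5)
We plan to work directly with the parametrization \eqref{para2} and equation \eqref{h3}, or more simply with the graph equation \eqref{hh}. Write $\vec w=(w_1,w_2,w_3)$. After a rotation about the $z$-axis, which preserves the equation $\Lambda=ct.$, we may assume $\vec w=(\cos\theta,0,\sin\theta)$.

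\textbf{Case $\vec w$ vertical.} Then $\nu_3=0$ along $\Sigma$. This is not allowed, since the energy \eqref{fu} requires $\nu_3\neq0$, so the case is excluded. We may therefore assume $\cos\theta\neq0$.

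\textbf{General case.} The idea is to view $\Sigma$ locally as a graph $z=u(x,y)$. A cylinder with rulings in direction $\vec w$ is invariant under translations along $\vec w$. Since $u$ is a graph over the $xy$-plane, this invariance reads
\[
u(x+t\cos\theta,\,y)=u(x,y)+t\sin\theta .
\]
Hence $u(x,y)=x\tan\theta+f(y)$ for some one-variable function $f$. Substituting into \eqref{hh} gives $f''(y)=\Lambda/2$.

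If $\Lambda=0$, then $f$ is affine, $u$ is affine, and $\Sigma$ is a plane, so $\alpha$ is a straight line. If $\Lambda\neq0$, then $f(y)=\frac{\Lambda}{4}y^2+c_1y+c_2$. Intersecting $\Sigma$ with a plane orthogonal to $\vec w$ gives the directrix $\alpha$, which is the image of the parabola $z=f(y)$ under a linear map. Therefore $\alpha$ is a parabola and $\Sigma$ is a parabolic cylinder.

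\textbf{Cross-check via \eqref{h3}.} Alternatively, and closer to the paper's setup, one can argue directly from \eqref{h3}. Here $X_t=\vec w$ and $X_{tt}=X_{st}=0$, so $h_{12}=h_{22}=0$ and $h_{11}=\kappa$ up to sign, where $\kappa$ is the curvature of $\alpha$. Moreover $\nu=\alpha'\times\vec w$, and $g_{ij}$ is the identity. Writing $E_1,E_2$ in terms of $X_s,X_t$, the coefficients $c_{11},c_{21}$ are $\langle E_i,\alpha'\rangle$. Then \eqref{h3} becomes an ODE of the form
\[
\Lambda\nu_3^3(1-\nu_3^2)=2\kappa\left(\langle e_3,\alpha'\rangle^2+\nu_3^2\langle \nu\times e_3,\alpha'\rangle^2\right).
\]
Parametrizing $\alpha'$ by an angle $\varphi(s)$ in the plane orthogonal to $\vec w$ gives $\kappa=\varphi'$, and the equation becomes a separable first-order ODE for $\varphi$. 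Integrating it should yield $\varphi'=0$ when $\Lambda=0$, and a parabola when $\Lambda\neq0$.

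\textbf{Main obstacle.} In this second route, the difficulty is carrying out that integration and recognizing the resulting curve as a parabola. This is why we prefer the graph argument as the main proof: it avoids the integration entirely, and only the identification $u=x\tan\theta+f(y)$ needs care, specifically the justification of the local graph representation when $\nu_3\neq0$.
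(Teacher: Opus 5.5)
Your main argument is correct, and it takes a genuinely different route from the paper.

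The paper never passes to the graph equation. It evaluates \eqref{h3} in the frame $\{\alpha',\vec w,\n\}$ to get \eqref{ec}. It then introduces the tangent angle $\theta$ of $\alpha$ and the angles $(\varphi,\phi)$ of \eqref{cij}, and arrives at the first-order law \eqref{ec3}, which it recognizes as the curvature of a parabola. That is exactly your ``cross-check'', and the obstacle you anticipate there does not materialize. Since $e_{11}^2+e_{33}^2=\cos^2\varphi$, one has $e_{11}^2+e_{22}^2e_{33}^2=\cos^2\varphi\,(1-e_{33}^2)$. So the factor $1-\nu_3^2$ cancels, and \eqref{ec} becomes $\kappa=\frac{\Lambda}{2}\cos\varphi\,\sin^3(\theta-\phi)$ with no integration. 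A plane curve whose curvature is a nonzero constant times the cube of the cosine of the angle between its tangent and a fixed line is a parabola: written as a graph over that line, it has constant second derivative.

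Your graph argument is instead the extension of the paper's Example \ref{ex1} from horizontal to arbitrary non-vertical rulings. In your notation, tangency of $\vec w=(\cos\theta,0,\sin\theta)$ gives $u_x=\tan\theta$ pointwise. This is cleaner than the finite-translation identity, which only holds locally. Then \eqref{hh} collapses to $f''=\Lambda/2$. In the orthonormal coordinates $(Y,P)=(y,\,-x\sin\theta+z\cos\theta)$ of any plane orthogonal to $\vec w$, the section is exactly $P=\cos\theta\,f(Y)$, which makes your ``image under a linear map'' step precise.

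This route buys several things:
\begin{itemize}
\item a shorter proof;
\item the explicit family $z=x\tan\theta+\frac{\Lambda}{4}y^2+c_1y+c_2$;
\item the exact constant in the curvature law (since $\sin\varphi=\langle e_3,\vec w\rangle$, it confirms the first-power factor $\cos\varphi$ above; the $(\cos\varphi)^3$ displayed in \eqref{ec3} is a slip, harmless for the conclusion);
\item no need for the restriction $\nu_3^2\neq1$.
\end{itemize}

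The cost is that you rely on the equivalence between constant anisotropic mean curvature and \eqref{hh}, and you should state it. For a graph with upward normal, \eqref{h2} reduces exactly to $\Lambda=2(u_{xx}+u_{yy})$; the paper uses this as its definition of stationary surface in the introduction and in Example \ref{ex1}. The paper's route, for its part, stays inside the parametric formalism \eqref{h3}, which it needs anyway for the non-cylindrical surfaces of Section \ref{s4}.
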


\begin{proof}
A first trivial case is when $\Sigma$ is a plane ($\Lambda=0$). From now, we will be discarded this case.   Consider the parametrization \eqref{para2}. Let $\n$ be the unit normal of $\alpha$ defined by $\n=\alpha''/|\alpha''|$, where $\kappa=|\alpha''|$ is the curvature of $\alpha$. Consider the orientation on  $\alpha$ such that $\alpha'\times \vec{w}=\n$. Since $X_s=\alpha'$ and $X_t=\vec{w}$, then $\nu=X_s\times X_t=\n$. Let write $e_3=(0,0,1)$ in coordinates with respect to $\{\alpha',\vec{w},\n\}$, 
$$e_3=e_{11}\alpha'+e_{22} \vec{w}+e_{33}\n.$$
Notice that $e_{33}=\langle e_3,\nu\rangle=\nu_3\not=0$.  
Recall that we discard the case $\nu_3^2= 1$ identically (horizontal plane). 

We compute all terms of \eqref{h3}. It is immediate  
 \begin{equation*}
\begin{split}
E_1&=e_{11}\alpha'+e_{22}\vec{w},\\
E_2&=-e_{22}\alpha'+e_{11}\vec{w}.
\end{split}
\end{equation*}
Since $h_{11}=\kappa$, $h_{12}=h_{22}=0$ and $\nu_3=e_{33}$, equation \eqref{h3} is 
\begin{equation}\label{ec}
\Lambda e_{33}^3(1-e_{33}^2)=2\kappa (e_{11}^2+e_{22}^2e_{33}^2).
\end{equation}
Consider a positively oriented (constant) basis $\mathcal{B}=\{v_1,v_2,v_3\}$ of $\r^3$ such that $v_3=\vec{w}$. Since the curve $\alpha$ is included in the $(v_1,v_2)$-plane and it is parametrized by arc-length, then there is a smooth function $\theta=\theta(s)$ such that 
$$\alpha'=\cos\theta v_1+\sin\theta v_2.$$
Thus 
$$\n=\alpha'\times \vec{w}=\sin\theta v_1-\cos\theta v_2.$$
Moreover, $\kappa=\langle\alpha'',\n\rangle=-\theta'$. Let write $e_3$ in coordinates with respect to $\mathcal{B}$, 
\begin{equation}\label{cij}
e_3=(a_1,a_2,a_3)=(\cos\varphi\cos\phi,\cos\varphi\sin\phi,\sin\varphi)
\end{equation}
for some $\varphi,\phi\in\r$.  Then 
\begin{equation*}
\begin{split}
e_{11}&=a_1\cos\theta+a_2\sin\theta,\\
 e_{22}&=a_3,\\
 e_{33}&=a_1\sin\theta-a_2\cos\theta.
 \end{split}
 \end{equation*}
Thus \eqref{ec} becomes 
\begin{equation}\label{ec2}
\begin{split}&\frac{\Lambda}{2}(a_1\sin\theta-a_2\cos\theta)^3(1-(a_1\sin\theta-a_2\cos\theta)^2)\\
&=-2\theta'\left((a_1\cos\theta+a_2\sin\theta)^2+a_3^2(a_1\sin\theta-a_2\cos\theta)^2\right).
 \end{split}
 \end{equation}
 Taking into account the value of $a_i$ in \eqref{cij}, equation \eqref{ec2} is  
\begin{equation}\label{ec3}
\theta'=-\frac{\Lambda}{2}(\cos\varphi)^3(\cos(\theta(s)+\phi))^3.\end{equation}
 If $\Lambda=0$, then $\kappa=-\theta'=0$ and $\alpha$ is a straight-line and $\Sigma$ is a plane. This case was already considered. If $\Lambda\not=0$,  equation \eqref{ec3} is the expression of  the curvature of a parabola when it is parametrized by arc-length. This proves the result.
 \end{proof}
 
 We give two examples.
 \begin{example}\label{ex1} Suppose that $\alpha$ is contained in a vertical plane which, after a rotation about the $z$-axis, we assume  the $xz$-coordinate plane. Then the vector $\vec{w}=\pm (0,1,0)$. Recall that $\vec{w}$ is the third vector in the basis $\mathcal{B}$. Let $\mathcal{B}=\{(0,0,1),(1,0,0),(0,1,0)\}$. Let choose $\varphi=\phi=0$. Then \eqref{ec3} is 
  $$\kappa=-\frac{\Lambda}{2}(\cos\theta )^3=\frac{\Lambda}{2}(\cos\theta)^3.$$
 If we write the curve $\alpha$ as $\alpha(x)=(x,0,z(x))$, then $\kappa=z''/(1+z'^2)^{3/2}$ and $\sin\theta=1/(1+z'^2)^{3/2}$. Thus \eqref{ec3} is simply $z''=\frac{\Lambda}{2}$. This is according to \eqref{hh} when we assume  functions $z=z(x,y)=z(x)$. The solution is the straight-line $z(x)=z_0$ ($\Lambda=0$) or the parabola $z(x)=\frac{\Lambda}{4}x^2+a x+b$, $a,b\in\r$. See Fig. \ref{fig0}, left.
 \end{example}
 
 \begin{example}\label{ex2} Let 
$$\mathcal{B}=\{(1,0,0),\frac{1}{\sqrt{2}}(0,1,-1),\frac{1}{\sqrt{2}}(0,1,1)\}.$$
Then $(a_1,a_2,a_3)=(0,-\frac{1}{\sqrt{2}},\frac{1}{\sqrt{2}})$. In this case, $\varphi=\pi/4$ and $\phi=-\pi/2$. Then  \eqref{ec2} becomes 
$$\kappa =\frac{\Lambda}{8}(\sin\theta)^3.$$
See Fig. \ref{fig0}, right.
\end{example}

\begin{figure}[hbtp]
\centering
\includegraphics[width=.35\textwidth]{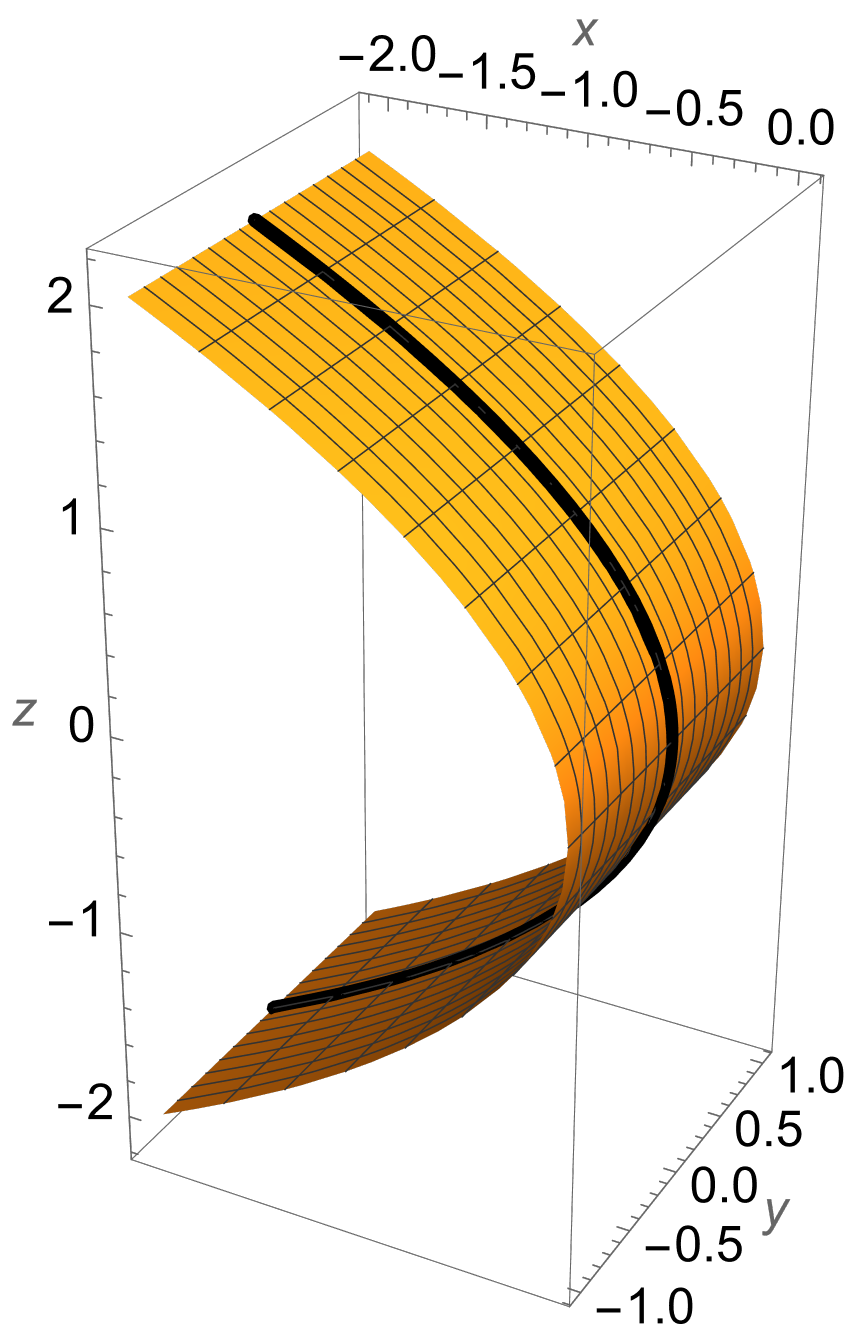}\quad \includegraphics[width=.35\textwidth]{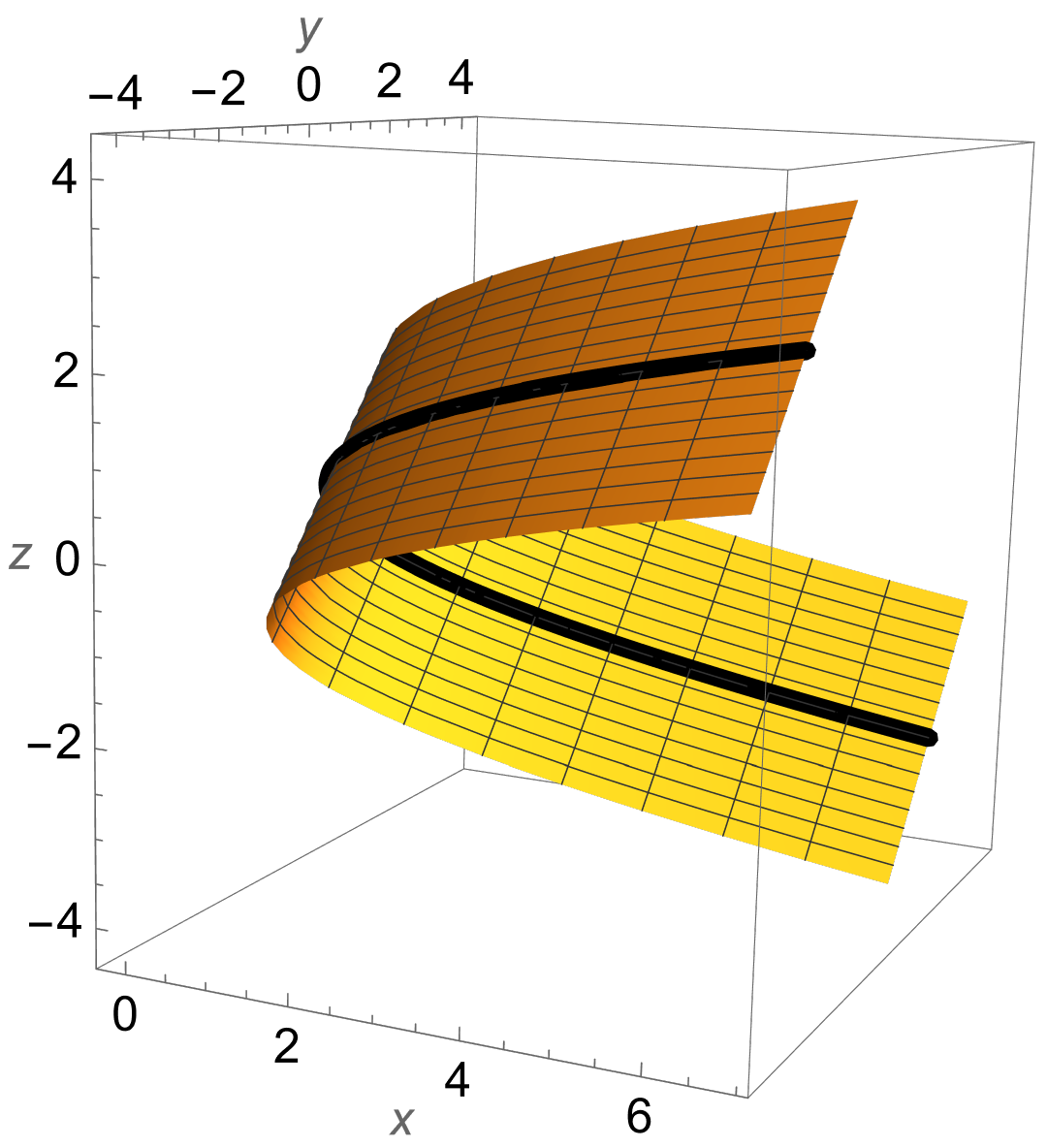}
\caption{Cylindrical CAMC surfaces for $\Lambda=2$. Left: Example \eqref{ex1}. Right: Example \eqref{ex2}. }\label{fig0}
\end{figure}

\section{Non-cylindrical  ruled surfaces } \label{s4}

Suppose that $\Sigma$ is a non-cylindrical ruled surface. Then $\Sigma$ is parametrized by \eqref{para1} where $\beta$ is not a constant curve. Since $\beta$ is a spherical curve, we associate an orthonormal frame $\mathcal{B}=\{v_1,v_2,v_3\}$ given by
\begin{equation*}
\left\{\begin{split}
v_1(s)&=\beta(s),\\
v_2(s)&=\beta'(s),\\
v_3(s)&=\beta(s)\times\beta'(s).
\end{split}
\right.
\end{equation*}
Let $\kappa=\mbox{det}(\beta'',\beta,\beta')$ be the curvature of $\beta$ viewed as a curve of $\s^2$. The corresponding Frenet equations are
\begin{equation*}
\left\{
\begin{split}
v_1'(s)&=v_2(s),\\
v_2'(s)&=-v_1(s)+\kappa(s) v_3(s),\\
v_3'(s)&=-\kappa(s) v_2(s).
\end{split}
\right.
\end{equation*}
Notice that if $\kappa\equiv 0$, then $\beta$ is a geodesic of $\s^2$ and $v_3=\beta\times\beta'$ is constant. In this case,  $\beta$ is a great circle in $\s^2$ which it is the intersection of $\s^2$ with the vector plane orthogonal to $v_3$.

Since $\alpha$ is orthogonal to $\beta=v_1$, the expression of $\alpha$ in coordinates with respect to $\mathcal{B}$ is 
$$\alpha(s)=a(s) v_2(s)+b(s) v_3(s),$$
for smooth functions $a=a(s)$ and $b=b(s)$. In particular, the parametrization of $\Sigma$ writes in coordinates with respect to $\mathcal{B}$ is
\begin{equation}\label{para}
X(s,t)=(t,a(s),b(s)).
\end{equation}

The unit normal of $\Sigma$ is 
$$\nu=\frac{X_s\times X_t}{|X_s\times X_t|}=\frac{\alpha'\times v_1- t v_3}{\sqrt{W}},\quad W:=|\alpha'\times v_1- t v_3|^2.$$

We have $X_{ss}=\alpha''+t\beta''$, $X_{st}=\beta'$ and $X_{tt}=0$. After some calculations, the coefficients $g_{ij}$ of the first fundamental form are 
\begin{equation*}
\begin{split}
g_{11}&=t^2+2(a'-\kappa b)t+a^2+(a'-\kappa b)^2+(b'+\kappa a)^2,\\
g_{12}&=a,\\
g_{22}&=1,\\
\mbox{det}(g_{ij})&=W=t^2+2(a'-\kappa b)t +(a'-\kappa b)^2+(b'+\kappa a)^2.
\end{split}
\end{equation*}
On the other hand, the determinants $h_{ij}$ are
\begin{equation*}
\begin{split}
h_{11}&=-\left((a \kappa+b') (-a''+a (\kappa^2+1)+2 \kappa b'+b \kappa')\right)\\
&-\left(a'-b \kappa+t\right) \left(\kappa (2 a'-b \kappa+t)+a \kappa'+b''\right), \\
h_{12}&=b'+\kappa a,\\
h_{22}&=0\\
\end{split}
\end{equation*}
We write $e_3=(0,0,1)$  in coordinates with respect to $\mathcal{B}$,  
$$e_{3}=(e_{11}(s),e_{22}(s),e_{33}(s)).$$
 Since $\beta$ is not constant,   the coordinates $e_{22}$ and $e_{33}$ cannot be zero simultaneously. The calculations of $c_{ij}$ in \eqref{e1e2}  give
\begin{equation*}
\begin{split}
c_{11}&=\frac{e_{22} \left(a'-b \kappa +t\right)+e_{33} a \kappa +e_{33} b'}{W},\\
c_{12}&=\frac{1}{W}\Big( a \left(e_{22} \left(a'-b \kappa +t\right)+b' (2 e_{11} \kappa +e_{33})\right)+e_{11} (b'^2+(a'-b \kappa +t)^2)\\
&+a^2 \kappa  (e_{11} \kappa +e_{33})\Big),\\
c_{21}&=-\frac{e_{11}}{\sqrt{W}},\\
c_{22}&=\frac{e_{22} \left(a'-b \kappa +t\right)+a (e_{33} \kappa -e_{11})+e_{33} b'}{\sqrt{W}}.
\end{split}
\end{equation*}
Finally, 
\begin{equation}\label{nu3}
\nu_3=\frac{-e_{33} a'+e_{22} a\kappa+e_{22} b'+e_{33} b\kappa-e_{33} t}{\sqrt{W}}.
\end{equation}
Inserting all these computations in \eqref{h3}, we obtain a polynomial equation on $t$ of type
\begin{equation}\label{st}
\sum_{n=0}^5 A_n(s) t^n=0.
\end{equation}
Therefore, all coefficients $A_n$ vanish identically. The explicit computations of $A_n$ can be done after tedious calculations, increasing the degree of difficulty as we go from $n=5$ to $n=0$.   In this paper we have employed Mathematica for these computations \cite{wo}. 

The first coefficient to compute is  $n=5$, obtaining
$$A_5=e_{33}^3(e_{33}^2-1)\Lambda.$$
We separate the cases $\Lambda=0$ (Subsect. \ref{s41}) and $\Lambda\not=0$ (Subsect. \ref{s42}) .

In many of the next computations, we will use the relation $e_{11}^2+e_{22}^2+e_{33}^2=1$. We will also use the expression \eqref{nu3} to assure that $\nu_3\not=0$. 

In all cases, the arguments are the following. First we compute $A_n$ for the biggest $n$. Next, we impose the condition $A_n=00$ and this will give a discussion of cases. In each one of them, we compute $A_{n-1}$ and study equation $A_{n-1}=0$. We repeat the process decreasing the value of $n$  until we finish with the coefficient $A_0$ and the corresponding equation $A_0=0$. In some situations, the discussion of $A_m=0$ for some $m$ will imply that the rest of coefficients $A_{m-1},\ldots, A_0$ are trivially $0$. This means that no more computations can do.

\subsection{Case $\Lambda=0$}\label{s41}

The degree of Eq. \eqref{st} is $4$, where
$$A_4=-\kappa(e_{22}^2+e_{11}^2e_{33}^2).$$
Equation $A_4=0$ gives two cases.
\subsubsection{Case $\kappa=0$. } In particular, $\beta$ is a great circle of $\s^2$. The coefficient $A_3$ is 
$$A_3=(e_{11}^2+e_{22}^2)\left(2e_{11}e_{22}b'+(e_{11}^2-1)b''\right).$$
Equation $A_3=0$ gives two cases.
\begin{enumerate}
\item Case $e_{11}^2+e_{22}^2=0$. Then $e_{33}=\pm 1$ and thus $e_3=\pm v_3$. Without loss of generality, let $e_3=v_3=\beta\times\beta'$. This means that $\beta$ is the horizontal equator of $\s^2$.   Now $A_2$ is trivially $0$ and the last two coefficients are
\begin{equation*}
\begin{split}
A_1&=-b'^2b'',\\
A_0&=b'^2(b'(a+a'')-a'b'').
\end{split}
\end{equation*}
Then $A_1=0$ gives two subcases.
\begin{enumerate}
\item Subcase $b'=0$. Then $b(s)=b_0$. From the parametrization \eqref{para}  of $\Sigma$, we have $\langle X,e_3\rangle=b_0$. This proves that the surface is the horizontal plane of equation $z=b_0$.  This cases was considered because $\nu_3^2\equiv 1$. 
\item Subcase $b''=0$ and $b'\not=0$. Then $b(s)=b_1s+b_0$ with $b_1\not=0$. Then $A_0=0$ gives $a''+a=0$. Coming back, we have $$h(E_1,E_1)=\frac{b_1^3}{W^2}(a+a'')=0,\quad h(E_2,E_2)=0.$$
Since $\{E_1,E_2\}$ is an orthonormal basis, we deduce  that $H=0$ identically in $\Sigma$. Thus $\Sigma$ is a minimal surface. Let us observe that we can also compute 
$h_{11}=b_1(a''-a)$, $h_{12}=b_1$, $h_{22}=0$, together $g_{11}=1$ and $g_{12}=-a$. Since the second fundamental form is not identically $0$,  then the surface is not a plane. By the classification of the ruled minimal surface, we conclude that  $\Sigma$ is the helicoid \cite{ca}. 

We calculate explicitly this helicoid. The solution of $a''+a=0$ is a linear combination of the functions $\cos s$ and $\sin s$. Thus $a(s)=c_1\cos(s+c_2)$ for some $c_1,c_2\in\r$. Since $\beta$ is the horizontal equator, then $\beta(s)=(\cos (s+c_2),\sin (s+c_2),0)$. After a translation in the domain of the variable $s$, we can assume $c_2=0$ and a vertical translation of $\r^3$ allows to choose  $b_0=0$.   This gives \eqref{sol1}. See Fig. \ref{fig1}, left.

\end{enumerate}
\item  Case $2e_{11}e_{22}b'+(e_{11}^2-1)b''=0$ and $e_{11}^2+e_{22}^2\not=0$. Notice that $e_{11}^2\not=1$: otherwise,  $e_3=\beta$ and $\Sigma$ would be a cylindrical surface. Then 
\begin{equation}\label{b2}
b''=2b'\frac{e_{11}e_{22}}{1-e_{11}^2}.
\end{equation}
Now 
$$A_2=(e_{11}^2+e_{22}^2)b'\left( (1-e_{11}^2)(a+a'')+2e_{11}e_{33}b'  \right).$$
Since $e_{11}^2+e_{22}^2\not=0$, we discuss two cases from the equation $A_2=0$. 
\begin{enumerate}
\item Case $b'=0$. Then $b(s)=b_0$ is a constant function. For this value of $b$,  the  coefficients $A_1$ and $A_0$ are trivially $0$. Now $h_{11}=h_{12}=h_{22}=0$ so the second fundamental form is $0$ and  $\Sigma$ is a plane.  
\item Case $b'\not=0$. Then the second parenthesis of $A_2$ vanishes identically,  
\begin{equation}\label{a22}(1-e_{11}^2)(a+a'')+2e_{11}e_{33}b'=0.
\end{equation}
From this identity we get $b'$. We have two subcases according whether $e_{33}$ is $0$ or not. Notice that $e_{11}\not=0$. 
\begin{enumerate}
\item Case $e_{33}=0$. This is equivalent to say that the plane containing the geodesic $\beta$ also contains $e_3$. Consequently, $\beta$ is a vertical equator of $\s^2$.
Since $e_{11}^2+e_{22}^2=1$, 
\begin{equation}\label{b1}
b''=2b'\frac{e_{11}}{e_{22}}.
\end{equation}
Moreover, equation \eqref{a22} writes as 
$$ e_{22}^2 b'(a+a'')=0.$$
From $\nu_3\not=0$ in \eqref{nu3}, we know that $e_{22}b'\not=0$. Thus   $a+a''=0$. With this equation,  $A_1$ and $A_0$  are trivially $0$. We now find the parametrization of the surface. We have
$$e_{11}=\langle e_3,\beta\rangle,\quad e_{22}=\langle e_3,\beta'\rangle=e_{11}'.$$
Then $e_{11}^2+e_{22}^2=1$ writes as $e_{11}^2+e_{11}'^2=1$. By solving this equation, we obtain   $e_{11}=\sin(s+c)$ for some constant $c\in\r$. After a translation in the domain of the variable $s$, let $c=0$.  Thus $e_{22}=\cos s$. After a rotation about the $z$-axis, we can assume that $\beta$ is included in the $xz$-coordinate plane and thus, $\beta$ is parametrized by  
 $\beta(s)=(\cos s,0,\sin s)$. From the expression of $b''$ in \eqref{b1}, we solve obtaining $b(s)=b_1\tan(s) $, with $b_1\not=0$. Equation $a''+a=0$ yields $a(s)=c_1\cos(s+c_2)$ for some constants $c_1,c_2\in\r$.  This gives \eqref{sol2}. See Fig. \ref{fig1}, right.

\item Subcase $e_{33}\not=0$. From \eqref{a22}, we get 
$$b'=-\frac{(1-e_{11}^2)(a+a'')}{2e_{11}e_{33}}.$$
Then  \eqref{b2} gives
$$b''=-\frac{e_{22}}{e_{33}}(a+a'').$$
Now the rest of coefficients $A_1$ and $A_0$ are trivially $0$. 

Since $v_3$ is constant, the third coordinate $e_{33}$ too. Let $e_{33}^2=1-m^2$, for some $m\in (0,1)$ and suppose without loss of generality that $e_{33}>0$, $e_{33}=\sqrt{1-m^2}$. Then $e_{11}^2+e_{11}'^2=m^2$. Up to a translation on the domain of the variable $s$, we have $e_{11}=m\sin (s)$ and $e_{22}=m\cos s$. From the expressions of $b'$ and $b''$, we have
$$\frac{b''}{b'}=2m^2\frac{\sin s\cos s}{1-m^2\sin^2 s}.$$
Then 
$$b'=\frac{b_1}{1-m^2 (\sin s)^2},\quad  b_1\not=0.$$
Since $m^2<1$, the solution of this equation is  
$$b(s)=\frac{b_1}{\sqrt{1-m^2}}\tan^{-1}(\sqrt{1-m^2}\tan s).$$
Now we have 
$$a''+a=-\frac{e33}{e22}b''=-\frac{8 b_1 m \sqrt{1-m^2} \sin (s)}{\left(m^2 \cos (2 s)-m^2+2\right)^2}.$$
The solution of $a$ is given in   \eqref{sol3}. 

\end{enumerate}

\end{enumerate}
\end{enumerate}

\subsubsection{ Case $e_{22}^2+e_{11}^2e_{33}^2=0$ and $\kappa\not=0$.}

Now $e_{22}=0$ and $e_{11}e_{33}=0$. If $e_{33}=0$, then $e_{11}=\pm 1$, that is, $e_3=\pm \beta$ and the surface would be cylindrical. Since this case is not possible, then $e_{11}=0$. Thus $e_3=v_3=\beta\times\beta'$ and the curve is a horizontal great circle. This implies $\kappa=0$, which it is not possible.

\subsection{Case $\Lambda\not=0$}\label{s42}

From $A_5=0$ we have $e_{33}=0$ or $e_{33}^2=1$.
\begin{enumerate}
\item Case $e_{33}=0$. Then $A_4=2e_{22}^2\kappa$. If $e_{22}=0$, then $e_{11}=\pm 1$ and $\beta=\pm e_3$ would be a constant vector, which it is not possible. Thus $\kappa=0$. This means that $\beta$ is a great circle of $\s^2$, which it is contained in a vertical plane because   $e_{33}=0$. Now the coefficient $A_3$ is
$$A_3=2e_{22}(e_{22}b''-2e_{11}b').$$
Equation $A_3=0$ gives
$$b''=2\frac{e_{11}b'}{e_{22}}.$$
Inserting into $A_2$, we arrive to
$$A_2=e_{22}^2 b'(-2a+e_{22}\Lambda b'^2-2a'').$$
Notice that $\nu_3=e_{22}b'/\sqrt{W}$, in particular, $b'\not=0$. Thus $A_2=0$ implies
$$a''=-a+\frac12e_{22}\Lambda b'^2.$$
Now the coefficients $A_1$ and $A_0$ are trivially $0$. Since $e_{22}=e_{11}'$ and $e_{11}^2+e_{22}^2=1$, then we can do a similar discussion as in the case $\Lambda=0$, item (1). We have, after a translation on the domain of the variable $s$, $b(s)=b_1\tan s$, $b_1\not=0$.   Then function $a(s)$ satisfies the differential equation 
$$a''+a=\frac{\Lambda b_1^2}{2(\cos s)^3}.$$
The solution is given in \eqref{sol4}.

\item Case $e_{33}^2=1$. Without loss of generality, we assume $e_{33}=1$. This implies $e_3=v_3=\beta\times\beta'$. Thus $\beta$ is the horizontal  equator of $\s^2$, in particular, $\kappa=0$. Now $A_4$ is trivially $0$ and 
$$A_3=-\Lambda b'^2.$$
Then $b(s)=b_0$ is a constant function. Since $e_3=v_3$, from the expression of the parametrization $X$ in \eqref{para}, we have $\langle X,e_3\rangle=b_0$. This proves that $\Sigma$ is a horizontal plane, which it is not possible because we are assuming $\nu_3^2\not=1$.  
\end{enumerate}

We show some pictures of the examples of Thm. \ref{t1}. In Fig. \ref{fig1}, we consider the case $\Lambda=0$. The left surface is the helicoid, where we take $c_1=b_1=1$ in \eqref{sol1}. The directrix is $\alpha(s)=(-\sin s\cos s,(\cos s)^2, s)$. The right surface corresponds with \eqref{sol2} where the curve $\beta$ is a vertical great circle. Here $c_1=b_1=0$ and $c_2=0$ and the directrix is $\alpha(s)=(-\sin s\cos s,-\tan s,(\cos s)^2)$. 

In Fig. \ref{fig2}, we consider the case $\Lambda=2$ and the solution \eqref{sol4}. In the left surface, $c_1=b_1=1$ and $c=0$. In this case, the directrix is the straight-line $\alpha(s)=\frac12(-\tan s,-2\tan s,1)$. The choice of the constants in the left surface are $c_1=3$, $b_1=1$ and $c=0$.

\begin{figure}[hbtp]
\centering
\includegraphics[width=.45\textwidth]{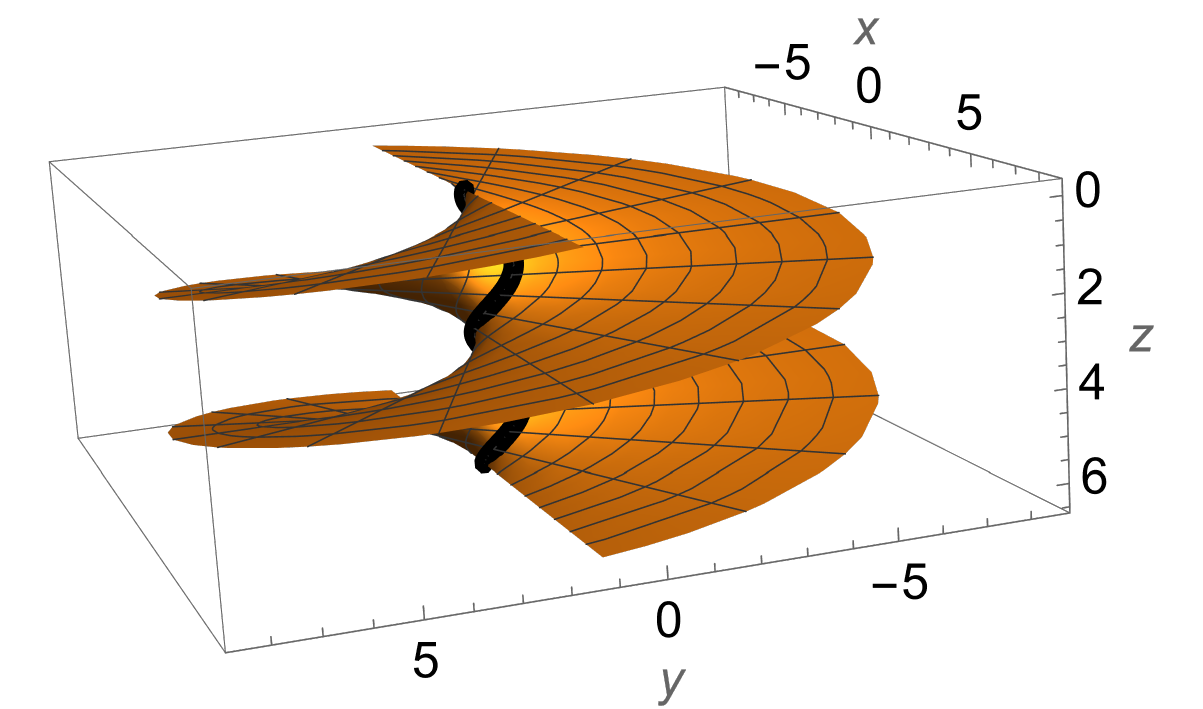}\quad \includegraphics[width=.5\textwidth]{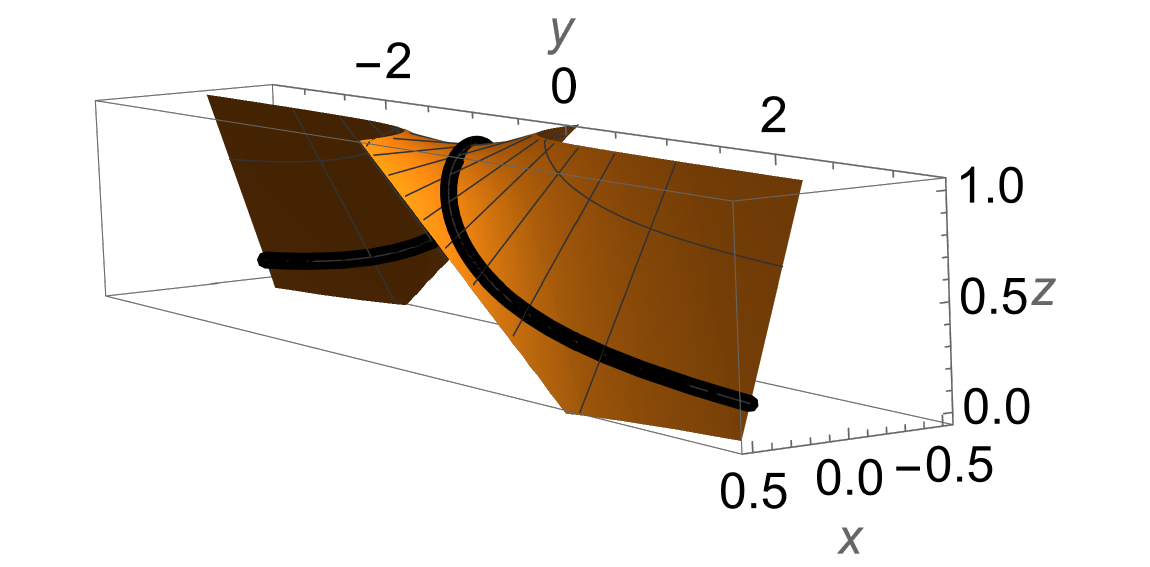}
\caption{Case $\Lambda=0$.  Black line is the directrix $\alpha$. The left surface is the helicoid.}\label{fig1}
\end{figure}

\begin{figure}[hbtp]
\centering
\includegraphics[width=.55\textwidth]{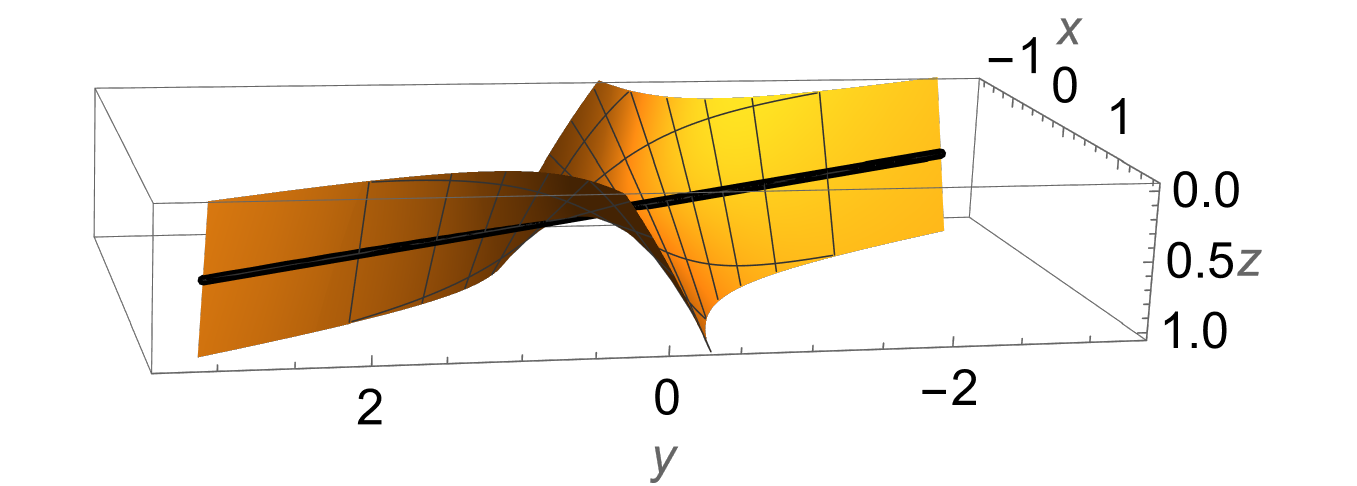}\quad \includegraphics[width=.4\textwidth]{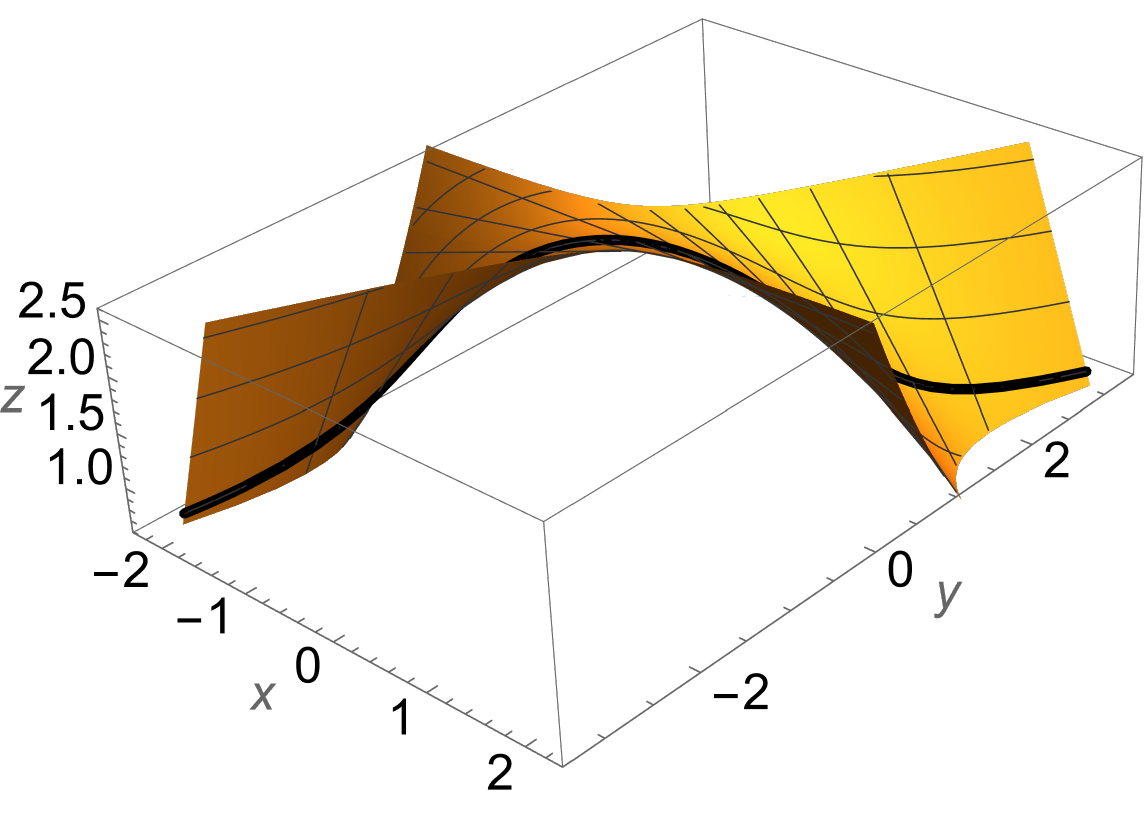}
\caption{Case $\Lambda=2$. Black line is the directrix $\alpha$. }\label{fig2}
\end{figure}

\section*{Acknowledgements}
 The author would like to thank the anonymous referee for insightful suggestions, which helps to improve the exposition of the paper. The author  is a member of the IMAG and of the Research Group ``Problemas variacionales en geometr\'{\i}a'',  Junta de Andaluc\'{\i}a (FQM 325). This research has been partially supported by MINECO/MICINN/FEDER grant no. PID2023-150727NB-I00,  and by the ``Mar\'{\i}a de Maeztu'' Excellence Unit IMAG, reference CEX2020-001105- M, funded by MCINN/AEI/10.13039/ 501100011033/ CEX2020-001105-M.

\end{document}